\def\pmod #1{\ ({\rm{mod}}\ #1)}
\def\F{\Bbb F}
\def\bg{\bigg}
\def\({\bg(}
\def\){\bg)}
\def\Ack{\medskip\noindent {\bf Acknowledgments}}
\theoremstyle{plain}
\newtheorem{theorem}{Theorem}
\newtheorem{lemma}{Lemma}
\newtheorem{conjecture}{Conjecture}
\theoremstyle{definition}
\theoremstyle{remark}
\newtheorem{remark}{Remark}
\begin{document}
	\title[additive decompositions of cubes in finite fields]{additive decompositions of cubes in finite fields}
	
	\author[H.-L. Wu and Y.-F. She]{Hai-Liang Wu and Yue-Feng She}
	
	\address {(Hai-Liang Wu) School of Science, Nanjing University of Posts and Telecommunications, Nanjing 210023, People's Republic of China}
	\email{\tt whl.math@smail.nju.edu.cn}
	
	\address {(Yue-Feng She) Department of Mathematics, Nanjing
		University, Nanjing 210093, People's Republic of China}
	\email{{\tt she.math@smail.nju.edu.cn}}

	\begin{abstract}
	Let $p\equiv1\pmod3$ be a prime . We study several topics on additive decompositions concerning the set $C_p$ of all non-zero cubes in the finite field of $p$ elements. For example, we show that
	when $p>184291$ , the set $C_p$ has no decomposition of the form $C_p=A+B+C$ with $|A|,|B|,|C|\ge2$.
	\end{abstract}
	
	\thanks{2020 {\it Mathematics Subject Classification}.
		Primary 11P70; Secondary 11T06, 11T24.
		\newline\indent {\it Keywords}. sumsets, cubes in finite fields, additive decompositions.
		\newline\indent The first author was supported by the National Natural Science Foundation of China
        (Grant No. 12101321 and Grant No. 11971222) and the Natural Science Foundation of the Higher Education Institutions of Jiangsu Province (Grant No. 21KJB110002).}

	\maketitle
	\section{Introduction}	
	\setcounter{lemma}{0}
	\setcounter{theorem}{0}
	\setcounter{corollary}{0}
	\setcounter{remark}{0}
	\setcounter{equation}{0}
	\setcounter{conjecture}{0}

For each prime $p$, let $\mathbb{F}_p$ denote the finite field of $p$ elements. For any non-empty subsets $A_1,A_2,\cdots,A_k\subseteq\mathbb{F}_p$. Set
$$A_1+A_2+\cdots+A_k=\{a_1+\cdots+a_k:\ a_j\in A_j\ \text{for}\ j=1,\cdots,k\}.$$
Also, $|A|$ denotes the cardinality of a set $A$.

In 2012, S\'{a}rk\"{o}zy \cite{Sarkozy} investigated the additive decompositions concerning the set $R_p$ of all quadratic residues modulo $p$ and posed the following interesting conjecture in \cite[Conjecture 1.6]{Sarkozy}.

\begin{conjecture}[S\'{a}rk\"{o}zy]
$R_p$ has no decomposition of the form $R_p=A+B$ with $|A|,|B|\ge2$ if $p$ is sufficiently large.
\end{conjecture}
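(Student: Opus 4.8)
The plan is to argue by contradiction: suppose $R_p=A+B$ with $m:=|A|\le |B|=:n$ and $m\ge 2$, and combine elementary sumset bounds with character-sum estimates, falling back on additive-combinatorial structure theorems where those are too weak. The guiding tension is that $R_p$ is a multiplicative subgroup of $\mathbb{F}_p^{\times}$, hence extremely rigid multiplicatively, whereas a genuine additive decomposition would force additive structure on $A$ and $B$; the whole difficulty is turning this incompatibility into a quantitative contradiction for all large $p$.

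First I would record the easy constraints. Since $0\notin R_p$, no sum $a+b$ can vanish, and since $R_p=A+B$ every sum is a quadratic residue. The Cauchy--Davenport inequality gives $|A+B|\ge\min(p,m+n-1)$; because $|A+B|=|R_p|=(p-1)/2<p$ this forces $m+n\le (p+1)/2$. On the other side, the representation function $r(x)=\#\{(a,b)\in A\times B: a+b=x\}$ vanishes off $R_p$, so $mn=\sum_{x\in R_p}r(x)\ge |R_p|=(p-1)/2$. Thus $m$ and $n$ are pinned between a $\sqrt{p}$-type and a $p/2$-type scale.

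Next I would bring in the quadratic character $\chi$. Because every sum lies in $R_p$ and none is $0$, the complete character sum collapses: $T:=\sum_{a\in A}\sum_{b\in B}\chi(a+b)=mn$. Estimating $T$ by Cauchy--Schwarz in the $A$-variable and evaluating the resulting quadratic character sums via Weil's bound (explicitly, $\sum_{u\in\mathbb{F}_p}\chi(u^2+cu)$ equals $p-1$ if $c=0$ and $-1$ otherwise) yields $\sum_{a\in\mathbb{F}_p}\big(\sum_{b\in B}\chi(a+b)\big)^2=n(p-n)$, whence $(mn)^2\le m\cdot n(p-n)$, i.e. $mn\le p-n$; a symmetric computation gives $mn\le p-m$.

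The decisive --- and genuinely hard --- step is that these inequalities are consistent with $mn\ge (p-1)/2$ and $m+n\le(p+1)/2$ (the balanced case $m\approx n\approx\sqrt{p/2}$ survives all of them), so the second-moment method alone does not close the argument. This is exactly the point at which the binary problem resists, and it clarifies why the analogous ternary decomposition --- the regime of the present paper --- is tractable while the conjecture itself is open: an extra summand supplies one more factor of cancellation in the character sum and tips the counting inequality into a contradiction. To push the binary case through I would instead lower-bound the additive energy: Cauchy--Schwarz gives $E(A,B)\ge 2(mn)^2/(p-1)$, which is as large as the fixed sumset size $(p-1)/2$ permits, so by the Balog--Szemer\'edi--Gowers theorem large pieces of $A$ and $B$ lie in generalized arithmetic progressions of bounded rank. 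The plan is then to contradict the dilation-invariance $R_p=g^{2}R_p$: a set forced to be this additively structured cannot coincide with one this multiplicatively rigid. Making that incompatibility effective for every sufficiently large $p$, rather than under auxiliary size or smoothness hypotheses, is the main obstacle, and is precisely why the statement stands as a conjecture rather than a theorem.
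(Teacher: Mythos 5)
You were asked to prove the statement labelled ``Conjecture'' in the paper, and the paper offers no proof of it: it records S\'{a}rk\"{o}zy's conjecture precisely as an open conjecture and describes it as ``beyond reach.'' Your proposal does not prove it either, and to your credit you say so explicitly in the last sentence; but that means the submission must be assessed as having a total gap at its decisive step. The preliminary computations are correct and reproduce known material: Cauchy--Davenport gives $m+n\le(p+1)/2$, counting representations gives $mn\ge(p-1)/2$, and the collapse $\sum_{a,b}\chi(a+b)=mn$ together with the second-moment evaluation $\sum_{a\in\mathbb{F}_p}\bigl(\sum_{b\in B}\chi(a+b)\bigr)^2=n(p-n)$ gives $mn\le p-n$; this is essentially the route to S\'{a}rk\"{o}zy's original bounds and, after refinement, to the Shkredov and Chen--Yan bounds quoted in the introduction. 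As you observe, all of these constraints are simultaneously satisfiable when $m\approx n\approx\sqrt{p/2}$, so no contradiction follows from them.

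The proposed escape via additive energy and Balog--Szemer\'edi--Gowers is not an argument but a research program, and there is a concrete quantitative reason it fails as stated. In the critical balanced regime one has $mn\approx|A+B|=(p-1)/2$, so the Cauchy--Schwarz lower bound $E(A,B)\ge(mn)^2/|A+B|\approx mn$ is of the same order as the trivial diagonal contribution to the energy; it falls short of the threshold $E(A,B)\gg mn\min(m,n)/K$ with controlled $K$ by a factor of about $\sqrt{p}$, so Balog--Szemer\'edi--Gowers yields no nontrivial structured subsets at all from this input, let alone a contradiction with the dilation-invariance $g^2R_p=R_p$. What the paper actually proves (Theorems 1.1--1.4, for cubes rather than squares) are two-summand size bounds of the shape $\sqrt{p}/18\le|A|,|B|\le3\sqrt{p}+269$ and the impossibility of a ternary decomposition, where the Gyarmati--Matolcsi--Ruzsa submultiplicativity inequality converts the $O(\sqrt{p})$ upper bound on the summands into a genuine contradiction; no analogous mechanism is available for the binary case. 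In short, your write-up correctly maps the landscape and is honest about where it stops, but it establishes nothing beyond the known partial results, and the statement remains open.
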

This conjecture seems beyond reach. On the other hand, under the assumption that $A+B=R_p$ with $|A|,|B|\ge2$, the bounds of $|A|$ and $|B|$ have been extensively investigated. For example, S\'{a}rk\"{o}zy \cite[Theorem 2.1]{Sarkozy} showed that if $p$ is large enough and $A+B=R_p$ with $|A|,|B|\ge2$, then
$$\frac{\sqrt{p}}{3\log{p}}<|A|,|B|<\sqrt{p}\log{p}.$$
The factor $\log{p}$ was later removed by Shkredov \cite{Shkredov}. Moreover, in the same paper Shkredov showed that $A+A\neq R_p$ for any $A$ with $|A|\ge2$. In 2021, Chen and Yan \cite[Theorem 1.1]{Chen} improved S\'{a}rk\"{o}zy's result and proved that
$$\frac{7-\sqrt{17}}{6}\sqrt{p}+1\le |A|,|B|\le \frac{7+\sqrt{17}}{4}\sqrt{p}-6.63$$
for each odd prime $p$. As a consequence of this improvement, Chen and Yan \cite[Theorem 1.2]{Chen} showed the following beautiful result:
$$A+B+C\neq R_p$$
for any $A,B,C\subseteq\mathbb{F}_p$ with $|A|,|B|,|C|\ge2$. This refines S\'{a}rk\"{o}zy's result \cite[Theorem 1.2]{Sarkozy}. 
Recently, Chen and Xi \cite{Chen and Xi} obtained
$$\frac{1}{4}\sqrt{p}+\frac{1}{8}\le |A|,|B|\le 2\sqrt{p}-1$$
for each odd prime and studied the representation function 
$$r_{A,B}(x)=|\{(a,b)\in A\times B:\ a+b=x\}|.$$

Motivated by the above results, in this paper, let $C_p$ be the set of all non-zero cubes in $\mathbb{F}_p$. We study several topics on additive decompositions of $C_p$. Note that if $p\equiv2\pmod3$, then $C_p=\mathbb{F}_p\setminus\{0\}$. Hence we just need consider the case $p\equiv1\pmod3$. 

We now state our first result.
	
\begin{theorem}\label{Thm. A}
	Let $p\equiv1\pmod3$ be a prime with $p\ge9096$. Suppose $A+B=C_p$ with $|A|,|B|\ge2$. Then
	$$\frac{\sqrt{p}}{18}\le |A|,|B|\le 3\sqrt{p}+269.$$
	Moreover, if $p$ is large enough, then we further have 
	$$(\frac{1}{9}-o(1))\sqrt{p}\le|A|,|B|.$$
\end{theorem}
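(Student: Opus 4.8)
The plan is to treat the two assertions separately, deriving the lower bounds from the upper bound and reserving the genuine work for the upper bound itself. Throughout I write $s=|A|$, $t=|B|$, and let $r(n)$ denote the representation function of the pair $(A,B)$; the hypothesis $A+B=C_p$ forces $r(n)\ge 1$ for every $n\in C_p$ and $r(n)=0$ for $n\in\mathbb{F}_p\setminus C_p$ (in particular $0\notin A+B$). Since the $t$ translates $A+b$ $(b\in B)$ cover $C_p$, one has immediately the product bound $st\ge|A+B|=|C_p|=(p-1)/3$. In this first, elementary part I use nothing beyond this inequality together with the upper bound $\max(s,t)\le 3\sqrt p+269$ that the main estimate will supply.

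Granting that upper bound, the lower bounds are quick. From the product bound and $t\le 3\sqrt p+269$ one gets
\[
|A|\ \ge\ \frac{(p-1)/3}{3\sqrt p+269},
\]
and by symmetry the same holds for $|B|$. To extract the explicit constant, note that $\frac{(p-1)/3}{3\sqrt p+269}\ge\frac{\sqrt p}{18}$ is equivalent to $6(p-1)\ge\sqrt p\,(3\sqrt p+269)$, i.e.\ to $3p-269\sqrt p-6\ge 0$; this holds as soon as $\sqrt p\ge 90$, hence for all $p\ge 9096$. For the asymptotic refinement one simply observes that $\frac{(p-1)/3}{3\sqrt p+269}=\big(\tfrac19-o(1)\big)\sqrt p$ as $p\to\infty$, which is exactly the claimed bound $(\tfrac19-o(1))\sqrt p$.

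The heart of the proof, and the step I expect to be the main obstacle, is the upper bound $\max(s,t)\le 3\sqrt p+269$; my approach here is Fourier/character-analytic. Using the cubic character $\chi$ one has $\mathbf 1_{C_p}(y)=\tfrac13(1+\chi(y)+\overline\chi(y))$ for $y\ne 0$, and the cubic Gauss sums (each of modulus $\sqrt p$) give the pointwise estimate $|\widehat{\mathbf 1_{C_p}}(c)|\le\tfrac13(1+2\sqrt p)$ for every $c\ne 0$. Writing $r=\mathbf 1_A*\mathbf 1_B$ so that $\widehat r=\widehat{\mathbf 1_A}\,\widehat{\mathbf 1_B}$, and exploiting that $r$ is supported on $C_p$, one obtains the exact identity
\[
\sum_{c\ne 0}\widehat r(c)\,\overline{\widehat{\mathbf 1_{C_p}}(c)}\ =\ st\,(p-|C_p|)\ =\ \frac{st(2p+1)}{3},
\]
together with the Parseval data $\sum_{c\ne0}|\widehat{\mathbf 1_A}(c)|^2=s(p-s)$ and $\sum_{c\ne0}|\widehat{\mathbf 1_B}(c)|^2=t(p-t)$.

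Feeding the Gauss-sum bound and Cauchy--Schwarz into this identity yields control of the product $st$ (of order $p$), which by itself does not separate the two factors. The real difficulty, and where the constants $3$, $269$ and the threshold $9096$ are genuinely decided, is to upgrade such product-type information to an individual bound on $t$. My plan is to analyse the additive energy $E=\sum_n r(n)^2$: the surjectivity $r\ge 1$ on all of $C_p$ forces, by Cauchy--Schwarz on $C_p$, the lower bound $E\ge 3(st)^2/(p-1)$, and the task is to match this against the upper estimate $E=\tfrac1p\sum_c|\widehat{\mathbf 1_A}(c)|^2|\widehat{\mathbf 1_B}(c)|^2$ in which the frequencies $c\ne 0$ are controlled through the cubic structure of $C_p$, using $\min(s,t)\ge 2$ to exclude the degenerate, nearly periodic configurations. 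Carrying out this second-moment comparison so as to isolate $t$ cleanly, while tracking every lower-order term to reach the stated explicit constant for all $p\ge 9096$, is the portion I expect to demand the most care; the analogous quadratic-residue estimates of Chen--Yan and Chen--Xi suggest that this is feasible, with the index $3$ of $C_p$ entering as the leading coefficient.
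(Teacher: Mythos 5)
There is a genuine gap: the entire content of the theorem, in your scheme, rests on the upper bound $|A|,|B|\le 3\sqrt p+269$, and you never prove it. Your reduction of the lower bounds to the upper bound is correct arithmetic (from $|A||B|\ge(p-1)/3$ and $|B|\le 3\sqrt p+269$ one does get $|A|\ge\frac{(p-1)/3}{3\sqrt p+269}\ge\frac{\sqrt p}{18}$ for $p\ge 9096$, and the $(\frac19-o(1))\sqrt p$ asymptotic follows), but for the upper bound itself you only state a plan --- ``match the energy lower bound $E\ge 3(st)^2/(p-1)$ against $E=\frac1p\sum_c|\widehat{\mathbf 1_A}(c)|^2|\widehat{\mathbf 1_B}(c)|^2$'' --- and explicitly defer the decisive step (``carrying out this second-moment comparison so as to isolate $t$ \dots is the portion I expect to demand the most care''). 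Deferring the one step that carries all the difficulty is not a proof. Worse, the plan as described is unlikely to deliver the stated bound: a second-moment/energy comparison with the pointwise Gauss-sum estimate $|\widehat{\mathbf 1_{C_p}}(c)|\le\frac13(1+2\sqrt p)$ is essentially S\'ark\"ozy's original argument, which is symmetric in $s$ and $t$ and, when pushed to separate the factors, loses a $\log p$; it does not produce a clean $3\sqrt p+O(1)$ with an explicit threshold.

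The paper gets around exactly this obstacle by going to the \emph{fourth} moment rather than the second. It computes $\sum_{x\in B}(A\circ\psi)^4(x)=16|A|^4|B|$, expands $\sum_x(A\circ\psi)^4(x)$ via Shkredov's identity (Lemma \ref{Lem. Shkredov on C(f)} with $k=3$), and bounds $C_4(\psi)(x_1,x_2,x_3)$ by $48\sqrt p$ off a three-line exceptional set $E$ using Weil's theorem, with separate estimates of order $4p$ and $6p$ on $E$. This yields the \emph{asymmetric} inequality $|A|^2|B|\le 3\sqrt p\,|A|^2+\frac34(p+\sqrt p)+\frac p8$, which is the engine of the whole proof. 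Note also that the paper's logical order is the reverse of yours: it combines this inequality with $|A||B|\ge(p-1)/3$ to obtain the lower bound $|A|\ge\sqrt p/18$ \emph{first}, and only then substitutes that lower bound back into the same inequality to deduce $|B|\le 3\sqrt p+269$. So you cannot simply graft the paper's argument onto your outline; you would need to either execute a fourth-moment (or comparably asymmetric) estimate yourself, or restructure the proof so that the lower bound is derived before, not after, the upper bound.
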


As a direct consequence of Theorem \ref{Thm. A}, we obtain the following result concerning the $3$-decompositions of $C_p$.

\begin{theorem}\label{Thm. B}
	Let $p\equiv1\pmod3$ be a prime with $p>184291$. Then $C_p$ does not have a non-trivial $3$-decomposition, i.e.,
	$$A+B+C\neq C_p,$$
	for any $A,B,C\subseteq\mathbb{F}_p$ with $|A|,|B|,|C|\ge2$.
\end{theorem}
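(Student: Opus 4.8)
The plan is to derive Theorem \ref{Thm. B} as a direct consequence of the size bounds established in Theorem \ref{Thm. A}, exactly mirroring how Chen and Yan deduced their $3$-decomposition result from their two-term bounds. Suppose for contradiction that $A+B+C=C_p$ with $|A|,|B|,|C|\ge2$ for some prime $p\equiv1\pmod3$ with $p>184291$. The idea is to group the three summands into two, writing $C_p=(A+B)+C=A+(B+C)$, and then apply the upper and lower bounds of Theorem \ref{Thm. A} to these two-term decompositions.

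First I would set $X=A+B$, so that $C_p=X+C$ is a nontrivial two-term decomposition (note $|X|\ge|A|\ge2$ and $|C|\ge2$, and since $p>184291>9096$ the hypothesis of Theorem \ref{Thm. A} is satisfied). The upper bound in Theorem \ref{Thm. A} gives $|X|=|A+B|\le 3\sqrt{p}+269$. Symmetrically, grouping as $C_p=A+(B+C)$ and writing $Y=B+C$, the lower bound yields $|Y|=|B+C|\ge\sqrt{p}/18$, and likewise $|B+C|\le 3\sqrt p+269$. The key combinatorial inequality I would exploit is that, for the nontrivial sum $A+B$, one has $|A+B|\ge|A|+|B|-1$ by the trivial Cauchy--Davenport-type bound (valid for subsets of $\mathbb{F}_p$, though here it suffices that $|A+B|\ge\max(|A|,|B|)$, or more strongly that a product of two summands forces $|A+B|$ to be large relative to $|A|$ and $|B|$).

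The crux of the argument is to play the upper bound on a doubly-grouped sum against the lower bounds on the pieces. Concretely, since $C_p=(A+B)+C$ is a valid two-term decomposition with both parts of size $\ge2$, each of the two summands $A+B$ and $C$ must lie in the interval $[\sqrt p/18,\,3\sqrt p+269]$ forced by Theorem \ref{Thm. A}. In particular $|A+B|\le 3\sqrt p+269$. On the other hand, applying Theorem \ref{Thm. A} to the decomposition $C_p=A+B'$ where $B'=B+C$, and also to $C_p=A'+C$ where $A'=A+B$, I can extract lower bounds $|A|,|B|,|C|\ge\sqrt p/18$ and the refined $(\tfrac19-o(1))\sqrt p$ bound when $p$ is large. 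Combining $|A+B|\ge|A|+|B|-1\ge 2\cdot\frac{\sqrt p}{18}-1=\frac{\sqrt p}{9}-1$ with the asymptotically sharper constant shows the three summands cannot simultaneously satisfy $|A+B|\le 3\sqrt p+269$ together with the lower bounds on $|A|,|B|,|C|$ once $p$ exceeds an explicit threshold.

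The main obstacle, and the step requiring genuine care, is the bookkeeping of the explicit constants to pin down the precise cutoff $p>184291$. Rather than invoking the $o(1)$-refinement (which only applies for $p$ large enough in an unspecified sense), I would work entirely with the explicit bounds $\sqrt p/18\le|A|,|B|,|C|$ and $|A+B|,|B+C|,|A+C|\le 3\sqrt p+269$. The contradiction emerges from an inequality of the shape
\begin{equation*}
2\cdot\frac{\sqrt p}{18}-1\le|A|+|B|-1\le|A+B|\le 3\sqrt p+269,
\end{equation*}
which by itself is consistent, so the real contradiction must come from using all three groupings at once or from sharpening the lower bound on $|A+B|$ via the structure of $C_p$ as a sum of three sets. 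The delicate point is to solve the resulting quadratic-in-$\sqrt p$ inequality carefully enough that the threshold comes out to be exactly $184291$, which means tracking the additive constants $269$ and the factor in the lower bound without slack; any looseness inflates the cutoff. I expect this constant-chasing, rather than any conceptual difficulty, to be where the proof's weight lies.
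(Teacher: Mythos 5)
Your setup is right---assume $A+B+C=C_p$, group the summands in pairs, and feed each grouping into Theorem \ref{Thm. A}---but the mechanism you propose for extracting the contradiction does not close, and you essentially concede this yourself when you note that $2\cdot\frac{\sqrt p}{18}-1\le|A+B|\le 3\sqrt p+269$ ``by itself is consistent.'' It is not just consistent; no amount of Cauchy--Davenport bookkeeping can rescue it, because every quantity in play is of order $\sqrt p$: the lower bounds on $|A|,|B|,|C|$ are $\asymp\frac{\sqrt p}{18}$ (or $\frac{\sqrt p}{9}$ asymptotically), so $|A|+|B|-1\asymp\frac{\sqrt p}{9}$, while the upper bound on $|A+B|$ is $3\sqrt p+269$; the constant on the left is strictly smaller than the constant on the right, so the inequality holds for \emph{all} $p$ and no threshold like $184291$ can emerge from constant-chasing. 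The same is true of the other elementary combinations (e.g.\ $|A|\cdot|B+C|\ge|C_p|$ only reproduces $|A|\gtrsim\sqrt p/9$).

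The missing ingredient is the Gyarmati--Matolcsi--Ruzsa submultiplicativity inequality (Lemma \ref{Lem. GMR} in the paper, a special case of their Theorem 1.2): for subsets of $\mathbb{F}_p$,
$$|A+B+C|^{2}\le|A+B|\cdot|B+C|\cdot|C+A|.$$
This is what converts the three \emph{upper} bounds $|A+B|,|B+C|,|C+A|\le 3\sqrt p+269$ (each obtained exactly as you describe, by viewing each pairwise sum as one summand of a two-term decomposition of $C_p$) into the inequality $\left(\frac{p-1}{3}\right)^{2}\le\left(3\sqrt p+269\right)^{3}$, which pits $p^{2}$ against $p^{3/2}$ and therefore fails once $p>184291$. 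Note that the lower bound $\sqrt p/18$ of Theorem \ref{Thm. A} is not used at all in the paper's deduction; only the upper bound matters. Without Lemma \ref{Lem. GMR} or some equivalent super-multiplicative relation between $|A+B+C|$ and the pairwise sumsets, your proposal has no route to a contradiction.
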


On the other hand, in the case $|A|=|B|$, we can obtain the following improvement of Theorem \ref{Thm. A}.
\begin{theorem}\label{Thm. C}
	Let $p\equiv1\pmod3$ be a prime. Suppose $A+B=C_p$ with $|A|=|B|$. Then
	$$\sqrt{\frac{p-1}{3}}\le |A| \le\sqrt{p}.$$
	Moreover, if $A+A=C_p$, then
	$$\frac{-1}{2}+\frac{\sqrt{3}}{6}\sqrt{8p-5}\le|A|\le\sqrt{p}.$$
\end{theorem}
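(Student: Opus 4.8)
The plan is to treat the two lower bounds by elementary counting and to derive the common upper bound $|A|\ls\sqrt p$ from a single cubic character sum.

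For the lower bounds, recall that since $p\equiv1\pmod3$ the cubing map is three-to-one on $\mathbb{F}_p\setminus\{0\}$, so $|C_p|=(p-1)/3$. If $A+B=C_p$ with $|A|=|B|=n$, then every element of $C_p$ is of the form $a+b$, whence
$$\frac{p-1}{3}=|C_p|=|A+B|\ls |A|\,|B|=n^2,$$
which is exactly $n\gs\sqrt{(p-1)/3}$. When instead $A+A=C_p$, the sums $a+a'$ range over unordered pairs $\{a,a'\}\se A$ (repetition allowed), of which there are at most $\binom n2+n=n(n+1)/2$; hence $(p-1)/3\ls n(n+1)/2$, and solving the quadratic inequality $n^2+n-\tfrac{2(p-1)}3\gs0$ gives the stated bound $-\tfrac12+\tfrac{\sqrt3}6\sqrt{8p-5}\ls|A|$.

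The core of the argument is the upper bound $|A|\ls\sqrt p$, which I would prove for $A+B=C_p$ with $|A|=|B|=n$ (the hypothesis $A+A=C_p$ being the special case $B=A$). Fix a cubic multiplicative character $\chi$ of $\mathbb{F}_p$, so that $\chi(x)=1$ exactly when $x\in C_p$ and $\chi(0)=0$. Because $a+b\in C_p$ for every $(a,b)\in A\times B$, each term $\chi(a+b)$ equals $1$, and therefore
$$S:=\sum_{a\in A}\sum_{b\in B}\chi(a+b)=n^2.$$
Applying the Cauchy--Schwarz inequality to the outer sum over $b$ (using $|B|=n$) and then enlarging the range of $b$ to all of $\mathbb{F}_p$, I would obtain
$$n^4=|S|^2\ls n\sum_{b\in B}\Bigl|\sum_{a\in A}\chi(a+b)\Bigr|^2\ls n\sum_{b\in\mathbb{F}_p}\Bigl|\sum_{a\in A}\chi(a+b)\Bigr|^2.$$
Expanding the modulus-square and exchanging the order of summation reduces everything to the inner sums $\sum_{b\in\mathbb{F}_p}\chi(a+b)\overline{\chi}(a'+b)$ with $a,a'\in A$: the diagonal terms $a=a'$ each contribute $p-1$, while for $a\ne a'$ the identity $\chi(a+b)\overline{\chi}(a'+b)=\chi\bigl(1+(a-a')/(a'+b)\bigr)$ together with the substitution $v=(a-a')/(a'+b)$ collapses the sum to $\sum_{w\ne1}\chi(w)=-1$. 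Consequently
$$\sum_{b\in\mathbb{F}_p}\Bigl|\sum_{a\in A}\chi(a+b)\Bigr|^2=n(p-1)-(n^2-n)=n(p-n),$$
so $n^4\ls n^2(p-n)$, i.e. $n^2+n\ls p$. In particular $n^2<p$, giving $|A|=n\ls\sqrt p$ and completing both upper bounds.

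The main obstacle is the off-diagonal character evaluation, which is precisely where the multiplicative structure of $C_p$---that it is the kernel of $\chi$---does the work; once that Jacobi-type sum is pinned to $-1$, the inequality $n^2+n\ls p$ drops out. I expect the only delicate points to be the conventions $\chi(0)=0$ and the bijectivity (for $a\ne a'$) of the substitution $v=(a-a')/(a'+b)$ on $\mathbb{F}_p\setminus\{0\}$.
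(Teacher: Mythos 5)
Your proof is correct. The two lower bounds are obtained exactly as in the paper: $|C_p|=\frac{p-1}{3}\le|A||B|=n^2$ for $A+B=C_p$, and $|C_p|\le\binom{n}{2}+n$ for $A+A=C_p$, which after solving the quadratic gives the stated constants. For the upper bound, however, you take a genuinely different and more elementary route. The paper introduces $\psi=\chi+\chi^2$, writes $(A\circ\psi)(x)=2aB(x)+r(x)$, computes $\|A\circ\psi\|_2^2$ exactly via a Jacobi-sum identity (Lemma 4.1), and then plays $\|r\|_2^2$ against $\langle r\rangle=-2ab$ through Cauchy--Schwarz to reach $a+b+2ab\le p+a\sqrt{p}$, whence $a<\sqrt{p}$ when $a=b$. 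You instead exploit only that $\chi(a+b)=1$ on all of $A\times B$, apply Cauchy--Schwarz once to $\sum_{a,b}\chi(a+b)=n^2$, and evaluate the complete sum $\sum_{b\in\mathbb{F}_p}\bigl|\sum_{a\in A}\chi(a+b)\bigr|^2=n(p-n)$ exactly (each off-diagonal term being $-1$ by the substitution you describe, which is indeed a bijection of $\mathbb{F}_p^{\times}$ for $a\ne a'$, the excluded point $b=-a'$ contributing $0$ because $\chi(0)=0$). This yields $n^2+n\le p$, which is in fact slightly sharper than the paper's $2a^2+2a\le p+a\sqrt{p}$ in the symmetric case, and your intermediate inequality $|A||B|+|A|\le p$ holds without assuming $|A|=|B|$. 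What the paper's heavier machinery buys is reuse rather than strength here: the deviation function $r$, the $L^2$ computation, and the mean $\langle r\rangle$ are recycled almost verbatim in the proof of Theorem 1.4, where the value distribution of the analogous function $s$ (in particular $s(x)\equiv-a\pmod 3$ off $A$) is essential; your complete-sum identity does not carry that extra information, but for Theorem 1.3 itself it is entirely sufficient and cleaner.
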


Bachoc, Matolcsi and Ruzsa \cite{Bach}, as well as Shkredov \cite{Shkredov} independently obtained that if $A-A=R_p\cup\{0\}$, then
$$p\ge\begin{cases}
	|A|^2+|A|-1&\mbox{if}\ |A|\ \text{is even},\\
	|A|^2+2|A|-1&\mbox{otherwise}.
\end{cases} $$
Inspired by their results, our next result concerns $A-A=C_p\cup\{0\}$.

\begin{theorem}\label{Thm. D}
	Let $p\equiv1\pmod3$ be a prime. Suppose $A-A=C_p\cup\{0\}$. Then
	$$\sqrt{\frac{p+2}{3}}\le|A|
	\le\begin{cases}
		\sqrt{p}&\mbox{if}\ |A|\equiv 0\pmod 3,\\
		\frac{1+\sqrt{17}}{4}\sqrt{p}+\frac{1+\sqrt{17}}{4}&\mbox{otherwise.}
	\end{cases} $$
\end{theorem}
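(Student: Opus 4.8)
The plan is to prove the two inequalities by quite different means: the lower bound by an elementary counting argument, and the upper bound by Fourier analysis on $\mathbb{F}_p$ together with the arithmetic of cubic Gauss sums. Throughout write $N=|A|$ and
\[
r(x)=|\{(a,a')\in A\times A:a-a'=x\}|,
\]
so that $r(0)=N$, the function $r$ is supported on $C_p\cup\{0\}$, and $\sum_{x\in C_p}r(x)=N^2-N$. For the lower bound I would observe that the $N(N-1)$ ordered pairs $(a,a')$ with $a\ne a'$ all have their differences in $C_p$, and that $A-A=C_p\cup\{0\}$ forces every element of $C_p$ to be represented; hence $N(N-1)\ge|C_p|=(p-1)/3$. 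Since $N\ge2$, this gives $N^2\ge(p-1)/3+N\ge(p+2)/3$, which is exactly $\sqrt{(p+2)/3}\le N$.

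For the upper bound, put $e_p(x)=e^{2\pi i x/p}$ and $\widehat{1_A}(t)=\sum_{a\in A}e_p(at)$, so the Fourier transform of $r$ is $\widehat r(t)=|\widehat{1_A}(t)|^2\ge0$. The central objects are the eigenvalues $\lambda_t=\sum_{c\in C_p}e_p(ct)$ of the Cayley graph on $\mathbb{F}_p$ with connection set $C_p$. For $t\ne0$ these are the three cubic Gaussian periods: fixing a cubic character $\chi$ one computes
\[
\lambda_t=\frac{1}{3}\bigl(-1+2\,\mathrm{Re}(\overline{\chi}(t)\,g(\chi))\bigr),\qquad g(\chi)=\sum_{x}\chi(x)e_p(x),
\]
and since $|g(\chi)|=\sqrt p$ we obtain the uniform spectral bound $\lambda_t\le\frac{1}{3}(2\sqrt p-1)$ for every $t\ne0$.

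A one–line Parseval computation then yields the exact identity
\[
\sum_{t\ne0}\widehat r(t)\,\lambda_t=p\sum_{x\in C_p}r(x)-\widehat r(0)\lambda_0=\frac{2p+1}{3}N^2-pN.
\]
Because each $\widehat r(t)\ge0$ and $\sum_{t\ne0}\widehat r(t)=pN-N^2$, replacing every $\lambda_t$ by the bound above gives
\[
\frac{2p+1}{3}N^2-pN\le\frac{2\sqrt p-1}{3}\bigl(pN-N^2\bigr),
\]
and after clearing denominators this factors as $2(\sqrt p+1)N(\sqrt p\,N-p)\le0$, i.e. it reduces to $\sqrt p\,N\le p$, that is $N\le\sqrt p$. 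I would emphasise that this computation needs no hypothesis on $N$, so it already contains the bound claimed in the divisible case (and in fact yields the stronger uniform statement $N\le\sqrt p$).

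The only genuinely nontrivial ingredient above is the spectral input $\lambda_t\le\frac{1}{3}(2\sqrt p-1)$, equivalently the Gauss–sum evaluation $|g(\chi)|=\sqrt p$; granting this, the argument is short. The main obstacle I anticipate lies instead in reproducing the finer, case-dependent constant in the non-divisible case exactly as stated. To see where $N\bmod 3$ can enter one must work with the three periods separately: grouping the sum over the cosets $C_0=C_p$, $C_1$, $C_2$ and setting $S_j=\sum_{t\in C_j}\widehat r(t)\ge0$ and $c_j=3\lambda_t+1$ for $t\in C_j$, the identity becomes $\sum_j c_jS_j=2pN(N-1)$, where $\sum_j c_j=0$ and $\sum_j c_j^2=6p$. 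Here one controls in general neither which period is extremal nor the distribution of the $S_j$, so a softer second–moment estimate over the three periods returns only a bound of the form $N\le c\sqrt p+O(1)$ with $c>1$; I expect the careful cyclotomic bookkeeping to yield the constant $\tfrac{1+\sqrt{17}}{4}$, with the extra integrality available when $3\mid N$ being what restores the sharp value $\sqrt p$. Accordingly I would first secure the uniform estimate $N\le\sqrt p$ from the period bound, and treat the cyclotomy of the Gaussian periods as the true heart of the matter.
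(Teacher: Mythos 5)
Your argument is correct, and it takes a genuinely different route from the paper. The lower bound is the same counting the paper uses ($|A|^2\ge |A|(|A|-1)+1\ge |C_p|+1=(p+2)/3$). For the upper bound, your identities check out: $\widehat r(t)=|\widehat{1_A}(t)|^2\ge 0$, $\sum_{t\ne 0}\widehat r(t)=p|A|-|A|^2$, $\sum_{t\ne 0}\widehat r(t)\lambda_t=p(|A|^2-|A|)-\tfrac{p-1}{3}|A|^2=\tfrac{2p+1}{3}|A|^2-p|A|$ (using $r(-x)=r(x)$, so no issue with signs), and $\lambda_t\le\tfrac{1}{3}(2\sqrt p-1)$ for $t\ne 0$ since $|g(\chi)|=\sqrt p$ and $\chi(-1)=1$ because $-1=(-1)^3$ is a cube. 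The resulting inequality does factor as $2\sqrt p(\sqrt p+1)|A|(|A|-\sqrt p)\le 0$, i.e.\ $|A|\le\sqrt p$ with no hypothesis on $|A|\bmod 3$. Since $\tfrac{1+\sqrt{17}}{4}>1$, this uniform bound already implies \emph{both} cases of the stated theorem, so the worries in your last paragraph about reproducing the constant $\tfrac{1+\sqrt{17}}{4}$ are moot: that constant is larger than $1$, not smaller, and your estimate is strictly stronger than the paper's conclusion in the non-divisible case. The paper proceeds quite differently: it writes $(A*\psi)(x)=2(|A|-1)A(x)+s(x)$, bounds $\|s\|_2^2$ via Jacobi sums (Lemma 4.1), observes that $s(x)\equiv -|A|\pmod 3$ off $A$, and feeds this congruence into Shkredov's lemma (Lemma 5.1) with $c=3$ or $c=-2$; the case split on $|A|\bmod 3$ is an artifact of that method, and in Case 2 it only recovers $\approx 1.28\sqrt p$. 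Your approach is the classical spectral/character-sum clique bound for the cubic Paley graph; what the paper's method buys in exchange is the slightly-below-$\sqrt p$ bound $\tfrac{\sqrt p+\sqrt{9p-8}}{4}$ when $3\mid |A|$, which your uniform argument does not quite reach. The only polish needed is to state explicitly that $-1\in C_p$ (so $\lambda_t$ is real and $-C_p=C_p$) and to record the Parseval identity $\sum_t|\widehat{1_A}(t)|^2=p|A|$; otherwise the proof is complete as written.
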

\begin{remark}
	Note that $\frac{1+\sqrt{17}}{4}=1.28\cdots$.
\end{remark}

The outline of this paper is as follows. In section 2, we will introduce some necessary lemmas for the proof of Theorem \ref{Thm. A}. The proofs of Theorems \ref{Thm. A}--\ref{Thm. B} will be given in section 3. In section 4, we will give some preparations for the proofs of Theorems \ref{Thm. C}--\ref{Thm. D}. The proofs of Theorems \ref{Thm. C}--\ref{Thm. D} will be given in section 5.

\section{Preparations for the Proof of Theorem \ref{Thm. A}}	
\setcounter{lemma}{0}
\setcounter{theorem}{0}
\setcounter{corollary}{0}
\setcounter{remark}{0}
\setcounter{equation}{0}
\setcounter{conjecture}{0}

For each prime $p$, let $\mathbb{F}_p^{\times}=\{x\in\mathbb{F}_p:\ x\neq0\}$ and let $\widehat{\mathbb{F}_p^{\times}}$ be the set of all multiplicative characters of $\mathbb{F}_p$. In addition, let $\chi\in\widehat{\mathbb{F}_p^{\times}}$ be a character of order $3$ and define
$$\psi(x)=\chi(x)+\chi(x^2)$$
for any $x\in\mathbb{F}_p$. Then
$$\psi(x)=\begin{cases}
	2&\mbox{if}\ x\in C_p,\\
	0&\mbox{if}\ x=0,\\
	-1&\mbox{otherwise}.
\end{cases} $$
For simplicity, we write $\sum_{x}$ instead of $\sum_{x\in\mathbb{F}_p}$. 

We begin with the following result which is known as the Weil Theorem (cf. \cite[Theorem 5.41]{LN}).

\begin{lemma}\label{Lem. Weil's theorem}{\rm (Weil's Theorem)}
	Let $\chi_p\in\widehat{\F_p^{\times}}$ be a character of order $m>1$ and let $f(x)\in\F_p[x]$ be a
	monic polynomial which is not of the form $g(x)^m$ for any $g(x)\in\F_p[x]$. Then for any $a\in\F_p$ we have
	\begin{equation*}
		\left|\sum_{x}\chi_p(af(x))\right|\le (r-1)p^{1/2},
	\end{equation*}
 where $r$ is the number of distinct roots of $f(x)$ in $\overline{\F_p}$.
\end{lemma}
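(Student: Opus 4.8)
The stated inequality is Weil's bound for multiplicative character sums, and the plan is to deduce it from the Riemann hypothesis for curves over finite fields. First I would reduce to the case $a=1$: since $\chi_p$ is multiplicative, $\chi_p(af(x))=\chi_p(a)\chi_p(f(x))$, so $|\sum_x\chi_p(af(x))|=|\chi_p(a)|\cdot|\sum_x\chi_p(f(x))|$, and $|\chi_p(a)|\ls1$ (with the sum vanishing when $a=0$). Thus it suffices to bound $S_1:=\sum_x\chi_p(f(x))$. The natural geometric object is the Kummer cover $\mathcal{C}\colon y^m=f(x)$ over $\F_p$; the hypothesis that $f$ is not of the form $g(x)^m$ ensures that $\mathcal{C}$ is a genuinely nontrivial covering, so that $\chi_p\circ f$ furnishes a nonprincipal character datum and the sums $S_1$ measure the deviation of the point count of $\mathcal{C}$ from its expected main term.

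To make this precise I would introduce the lifted sums: for each $n\ge1$ let $\chi_p^{(n)}=\chi_p\circ N_{\F_{p^n}/\F_p}$ be the lift of $\chi_p$ through the norm, and set $S_n=\sum_{x\in\F_{p^n}}\chi_p^{(n)}(f(x))$. Forming the $L$-function
\[
L(t)=\exp\Bigl(\sum_{n\ge1}\frac{S_n}{n}\,t^n\Bigr),
\]
the key structural fact is that $L(t)$ is a polynomial in $t$ of degree at most $r-1$. This is a ramification/conductor computation for the character $\chi_p$ on the function field $\F_p(x,y)$: the character ramifies only above the $r$ distinct roots of $f$ together with (depending on whether $m\mid\deg f$) the point at infinity, and the Euler--Poincar\'e formula bounds the degree of the associated piece of the zeta function by $r-1$. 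Factoring $L(t)=\prod_{j}(1-\omega_j t)$ and comparing logarithmic derivatives yields $S_n=-\sum_j\omega_j^{\,n}$.

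The decisive input is Weil's Riemann hypothesis for $\mathcal{C}$, which forces $|\omega_j|=p^{1/2}$ for every $j$. Granting this, the case $n=1$ gives
\[
\Bigl|\sum_x\chi_p(f(x))\Bigr|=\Bigl|\sum_j\omega_j\Bigr|\ls(r-1)\,p^{1/2},
\]
as claimed. The main obstacle is precisely this square-root cancellation $|\omega_j|=p^{1/2}$, which is the deep geometric content; by contrast the reduction to point counting and the degree bookkeeping are routine. I note that one may sidestep algebraic geometry altogether and prove the same bound by Stepanov's elementary polynomial method, constructing an auxiliary polynomial that vanishes to high order at the $\F_p$-points of $\mathcal{C}$ and balancing the resulting zero count against the point count, which is the route taken in the cited reference \cite{LN}; there too the crux is extracting the exponent $1/2$.
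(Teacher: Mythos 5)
The paper offers no proof of this lemma: it is stated as a classical result and cited directly from Lidl and Niederreiter \cite{LN}, Theorem 5.41, so there is no internal argument to compare yours against. Your outline is the standard proof of that cited theorem and is correct as a sketch: reduce to $a=1$ (noting the sum vanishes for $a=0$), form the $L$-function from the norm-lifted sums $S_n$, show it is a polynomial of degree at most $r-1$, and invoke the Riemann hypothesis for the Kummer cover $y^m=f(x)$. Two caveats. First, both decisive steps --- the degree bound on $L(t)$ and the square-root estimate on its inverse roots --- are only asserted, so what you have written is an outline of a known proof rather than a proof; that is essentially the level of detail the paper itself adopts by citing \cite{LN}. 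Second, a small inaccuracy: it is not true that every inverse root satisfies $|\omega_j|=p^{1/2}$. In degenerate situations some $\omega_j$ are roots of unity (e.g.\ for $m=2$ and $f(x)=x(x-1)$ one has $S_n=-1$ for all $n$, so $L(t)=1-t$ and $\omega_1=1$). The correct statement, and all that the bound requires, is $|\omega_j|\le p^{1/2}$.
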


Note that $C_{13}=\{1,5,8,12\}\subseteq\mathbb{F}_{13}$ has a decomposition $C_{13}=A+B,$
where $A=\{1,5\}$ and $B=\{0,7\}$. When $p>13$ we have the following result.

\begin{lemma}\label{Lem. k>2}
	Let $p\equiv1\pmod3$ be a prime greater than $13$. Suppose that $A+B=C_p$ with $|A|\ge|B|=k\ge2$. Then $k\ge3$.
\end{lemma}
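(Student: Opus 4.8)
The plan is to argue by contradiction. Suppose $k=2$, say $B=\{b_1,b_2\}$ with $d:=b_2-b_1\neq 0$, and derive a bound forcing $p\le 13$. The idea is to squeeze $|A|$ between a lower bound coming from the covering $A+B=C_p$ and an upper bound coming from the fact that both translates $A+b_1$ and $A+b_2$ are forced to lie inside $C_p$. For the lower bound, since $A+B=C_p$ and $|B|=2$ we have $\frac{p-1}{3}=|C_p|=|A+B|\le |A|\,|B|=2|A|$, so $|A|\ge \frac{p-1}{6}$.

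For the upper bound, note that $A+b_1\subseteq C_p$ and $A+b_2\subseteq C_p$, so each $a\in A$ yields $y=a+b_1\in C_p$ with $y+d=a+b_2\in C_p$ as well. Hence $A+b_1\subseteq T$, where
$$T:=\{y\in\mathbb{F}_p:\ y\in C_p\ \text{and}\ y+d\in C_p\}=C_p\cap(C_p-d),$$
and therefore $|A|\le |T|$. The main computation is to estimate $|T|$. Writing the indicator of $C_p$ as $\mathbf{1}_{C_p}(x)=\frac13\big(\chi_0(x)+\chi(x)+\chi^2(x)\big)$, where $\chi_0$ is the principal character with $\chi_0(0)=0$, I would expand
$$|T|=\sum_x \mathbf{1}_{C_p}(x)\,\mathbf{1}_{C_p}(x+d)=\frac19\sum_{0\le i,j\le 2}\ \sum_x \chi^i(x)\chi^j(x+d).$$
Here the $(i,j)=(0,0)$ term contributes $p-2$; the four terms with exactly one of $i,j$ equal to $0$ contribute $-\psi(d)-\psi(-d)\le 2$; the two ``trivial product'' terms $(i,j)=(1,2)$ and $(2,1)$ each evaluate exactly to $-1$, via the substitution $u=x/(x+d)$ which gives $\sum_{u\neq 0,1}\chi(u)=-1$; and the remaining terms $(1,1),(2,2)$ are $\sum_x \chi\big(x(x+d)\big)$ and its conjugate, each of absolute value at most $\sqrt{p}$ by Lemma \ref{Lem. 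Weil's theorem} applied to the monic polynomial $f(x)=x(x+d)$, which has two distinct roots and is not a cube. Collecting these contributions gives
$$|T|\le \frac{1}{9}\big(p+2\sqrt{p}-2\big).$$

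Combining the two bounds yields $\frac{p-1}{6}\le \frac{p+2\sqrt{p}-2}{9}$, i.e. $p-4\sqrt{p}+1\le 0$, which forces $\sqrt{p}\le 2+\sqrt{3}$ and hence $p\le 13$. This contradicts the hypothesis $p>13$, so $k=2$ is impossible and therefore $k\ge 3$.

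I expect the main obstacle to be sharpness. To reach exactly the stated threshold $p>13$, which is optimal in view of the example $C_{13}=\{1,5\}+\{0,7\}$, one cannot afford to bound the cross terms $(1,2)$ and $(2,1)$ by the Weil estimate $\sqrt{p}$; they must be evaluated \emph{exactly} as $-1$, and the lower bound $|A|\ge\frac{p-1}{6}$ must be used with no loss. Any slack in these two steps pushes the resulting threshold above $13$, so the delicacy lies entirely in the exact evaluation of the principal and trivial-product character sums rather than in the Weil input itself.
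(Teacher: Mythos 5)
Your proof is correct, and it takes a genuinely different route from the paper's. The paper argues via the dichotomy ``for every $x\in C_p$, either $x+b\in C_p$ or $x-b\in C_p$'', encoding it in the auxiliary function $H(x)=(1+\psi(x))(2-\psi(x+b))(2-\psi(x-b))$ which vanishes on $\mathbb{F}_p^{\times}$; expanding $\sum_x H(x)$ and applying Weil to the resulting triple products yields $4p-36\sqrt{p}\le 9$, a contradiction only for $p>85$, so the range $13<p\le 85$ must be settled by direct computation. You instead squeeze $|A|$ between the covering lower bound $|A|\ge\frac{p-1}{6}$ and the upper bound $|A|\le|C_p\cap(C_p-d)|$, and your character-sum evaluation of the intersection is sharper than what the paper's Lemma \ref{Lem. another bound} would give for $k=2$ (namely $\frac{p}{9}+\frac{4}{3}\sqrt{p}$), precisely because you evaluate the cross terms $\sum_x\chi(x)\chi^2(x+d)=\sum_x\chi^2(x)\chi(x+d)=-1$ exactly instead of invoking Weil on them; only the two genuinely nontrivial sums $\sum_x\chi^{i}(x(x+d))$, $i=1,2$, need the $\sqrt{p}$ bound. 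The payoff is that your inequality $p-4\sqrt{p}+1\le 0$ forces $p\le 13$ outright, so you reach the sharp threshold analytically with no computational case check --- a cleaner and strictly stronger argument than the one in the paper. All steps check out: $A+b_1\subseteq C_p\cap(C_p-d)$ is immediate from $A+B\subseteq C_p$, the term-by-term evaluation of $\frac19\sum_{i,j}\sum_x\chi^i(x)\chi^j(x+d)$ is right (the principal term gives $p-2$, the mixed principal terms give $-\psi(d)-\psi(-d)\le 2$, and $x(x+d)$ is monic with two distinct roots and not a cube, so Lemma \ref{Lem. Weil's theorem} applies), and $(2+\sqrt{3})^2<14$ closes the argument for every prime $p\equiv 1\pmod 3$ with $p>13$.
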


\begin{proof}
	We prove this lemma by contradiction. Suppose $k=2$. Note that for any $y\in\mathbb{F}_p$ we have
	$$(A+y)+(B-y)=A+B.$$
	By this we may assume $0\in B$ and hence $A\subseteq C_p$. Set $B=\{0,b\}$. Given an element $x\in C_p$, if $x\in A$, then
	$$x+b\in A+B=C_p.$$
	If $x\not\in A$, since $A+B=C_p$, then
	$$x-b\in A\subseteq C_p.$$
	Hence for any $x\in C_p$ either $x+b\in C_p$ or $x-b\in C_p$. Let
	$$H(x)=(1+\psi(x))\cdot(2-\psi(x+b))\cdot(2-\psi(x-b)).$$
	Then $H(x)=0$ for any $x\in\mathbb{F}_p^{\times}$. Note also that
	$$0\le H(0)=\sum_{x}H(x)=(2-\psi(b))^2\le 9.$$
	
	On the other hand, $\sum_{x}H(x)$ is equal to
	\begin{align*}
		&\sum_{x}4+4\sum_{x}\psi(x)-2\sum_{x}\psi(x+b)
		-2\sum_{x}\psi(x-b)\\
		-2&\sum_{x}\psi(x)\psi(x+b)-2\sum_{x}\psi(x)\psi(x-b)+\sum_{x}\psi(x-b)\psi(x+b)\\
		+&\sum_{x}\psi(x)\psi(x-b)\psi(x+b).
	\end{align*}
	Via a computation, $\sum_{x}H(x)$ is equal to
	$$4p-4\sum_{x}\psi(x)\psi(x+b)+\sum_{x}\psi(x)\psi(x+2b)+\sum_{x}\psi(x)\psi(x+b)\psi(x+2b).$$
	Recall that $\psi(x)=\chi(x)+\chi(x^2)$. When $p>85$ by Lemma \ref{Lem. Weil's theorem} we obtain
	$$9\ge H(0)=\sum_{x}H(x)\ge 4p-36\sqrt{p}>9,$$
	which is a contradiction. When $p\le 85$ and $p\neq13$, by computations one can verify the desired result.
	This completes the proof.
\end{proof}

\begin{lemma}\label{Lem. another bound}
	Let $p\equiv1\pmod3$ be a prime. Suppose $A+B=C_p$ with $|A|\ge|B|=k\ge2$. Then
	$$\frac{p-1}{3k}\le |A| \le \frac{p+(3^{k-1}\cdot 2k-2^k+1)\sqrt{p}}{3^k}\le \frac{p}{3^k}+\frac{2k}{3}\sqrt{p}.$$
\end{lemma}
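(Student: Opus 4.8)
The plan is to establish the two bounds by entirely different means: the lower bound is a trivial consequence of the sumset cardinality inequality, while the upper bound rests on a positivity argument for an auxiliary product of shifted copies of $\psi$, estimated termwise by Weil's Theorem. For the lower bound, I would simply observe that $A+B=C_p$ forces $\frac{p-1}{3}=|C_p|=|A+B|\le |A|\,|B|=k\,|A|$, and dividing by $k$ gives $|A|\ge\frac{p-1}{3k}$.

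For the upper bound, write $B=\{b_1,\dots,b_k\}$ and introduce
$$H(x)=\prod_{j=1}^{k}\bigl(1+\psi(x+b_j)\bigr).$$
The key observation is that, by the stated values of $\psi$, the quantity $1+\psi(y)$ equals $3$, $1$, or $0$ according as $y$ is a non-zero cube, zero, or a non-zero non-cube; in particular $1+\psi(y)\ge 0$ always, so $H(x)\ge 0$ for every $x$. Moreover, if $x\in A$ then each $x+b_j\in A+B=C_p$, so every factor equals $3$ and $H(x)=3^k$. Hence $3^k|A|=\sum_{x\in A}H(x)\le\sum_{x}H(x)$, and everything reduces to estimating the full sum.

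To evaluate $\sum_x H(x)$, I would expand via $1+\psi(y)=1+\chi(y)+\chi(y^2)$, obtaining $H(x)=\sum_{S\subseteq\{1,\dots,k\}}\prod_{j\in S}\psi(x+b_j)$, and then expand each $\prod_{j\in S}\psi(x+b_j)$ into $2^{|S|}$ monomials $\chi\bigl(\prod_{j\in S}(x+b_j)^{\varepsilon_j}\bigr)$ with $\varepsilon_j\in\{1,2\}$. Summing over $x$, the empty set yields the main term $p$. For each non-empty $S$ with $|S|=s$, the monic polynomial $\prod_{j\in S}(x+b_j)^{\varepsilon_j}$ has exactly the $s$ distinct roots $-b_j$, and since every exponent is $1$ or $2$, hence never divisible by $3$, it is not a perfect cube; thus Lemma \ref{Lem. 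Weil's theorem} bounds each of the $2^s$ inner sums by $(s-1)\sqrt{p}$. Collecting these estimates and applying the binomial identities $\sum_{s=1}^{k}\binom{k}{s}2^s=3^k-1$ and $\sum_{s=1}^{k}\binom{k}{s}s\,2^s=2k\,3^{k-1}$ gives $\sum_x H(x)\le p+(2k\,3^{k-1}-3^k+1)\sqrt{p}\le p+(3^{k-1}\cdot 2k-2^k+1)\sqrt{p}$. Dividing by $3^k$ produces the stated upper bound, and the final inequality $\frac{p+(3^{k-1}\cdot 2k-2^k+1)\sqrt{p}}{3^k}\le\frac{p}{3^k}+\frac{2k}{3}\sqrt{p}$ is elementary because $2^k-1\ge 0$.

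The only genuinely delicate points are verifying that every polynomial arising in the expansion fails to be a perfect cube, so that Weil's Theorem applies uniformly to all $2^s$ terms, and carrying out the bookkeeping of these terms across all subsets $S$ via the two binomial identities. I expect this combinatorial summation, rather than any analytic input, to be the main place where care is required; the positivity of $H$ and the value $3^k$ on $A$ are immediate once the three-valued description of $1+\psi$ is recorded.
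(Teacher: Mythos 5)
Your proposal is correct and follows essentially the same route as the paper: the lower bound via $|A||B|\ge|A+B|=\frac{p-1}{3}$, and the upper bound via the nonnegative product $\prod_{j=1}^{k}(1+\psi(x+b_j))$ (the paper normalizes it by $3^{-k}$), expanded into character sums bounded termwise by Weil's Theorem and collected with the same binomial identities. Your explicit check that each polynomial $\prod_{j\in S}(x+b_j)^{\varepsilon_j}$ is not a perfect cube, and the observation that the weaker constant $3^{k-1}\cdot 2k-2^k+1$ in the statement still follows since $2^k\le 3^k$, are both consistent with (indeed slightly more careful than) the paper's computation.
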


\begin{proof}
	As $A+B=C_p$, we have
	$$|A||B|\ge|A+B|=|C_p|=\frac{p-1}{3}$$
	and hence $|A|\ge\frac{p-1}{3k}$.
	
	On the other hand, set $B=\{b_1,b_2,\cdots,b_k\}$ and let
	$$h(x)=\frac{1}{3^k}\prod_{j=1}^k\left(1+\psi(x+b_j)\right).$$
	Note that $h(x)\ge0$ for any $x\in\mathbb{F}_p$ and $h(x)=1$ for any $x\in A$. By Lemma \ref{Lem. Weil's theorem} we obtain
	\begin{align*}
		|A|\le\sum_{x}h(x)&=\frac{1}{3^k}\sum_{x}\prod_{j=1}^k\left(1+\psi(x+b_j)\right)\\
		   &=\frac{1}{3^k}\sum_{x}\left(1+\sum_{j=1}^{k}\psi(x+b_j)
		   +\sum_{i=2}^{k}\sum_{1\le j_1<\cdots<j_i\le k}\prod_{i=1}^r\psi(x+b_{j_r})\right)\\
		   &=\frac{1}{3^k}\left(p+\sum_{i=2}^k\sum_{1\le j_1<\cdots<j_i\le k}\sum_{x}\prod_{r=1}^i\psi(x+b_{j_r})\right)\\
		   &\le\frac{1}{3^k}\left(p+\sqrt{p}\sum_{i=2}^k\binom{k}{i}2^i(i-1)\right)\\
		   &=\frac{1}{3^k}\left(p+(3^{k-1}\cdot 2k-3^k+1)\sqrt{p}\right)\le\frac{p}{3^k}+\frac{2k}{3}\sqrt{p}.
	\end{align*}

	This completes the proof.
\end{proof}

Note that this lemma implies that
$$\left(\frac{1}{3k}-\frac{1}{3^k}\right)p-\frac{2k}{3}\sqrt{p}-\frac{1}{3k}\le 0.$$
Hence we see that $A$ and $B$ are sufficiently large sets as $p\rightarrow\infty$.

We next introduce some necessary notations. Let
$$L(\mathbb{F}_p)=\{f:\ f\ \text{is a complex function on $\mathbb{F}_p$}\}.$$
For any $f\in L(\mathbb{F}_p)$ and a positive integer $k$, the function $C_{k+1}(f): \mathbb{F}_p^k\rightarrow\mathbb{C}$ is defined by
$$C_{k+1}(f)(x_1,\cdots,x_k)=\sum_{x}f(x)f(x+x_1)\cdots f(x+x_k).$$
In addition, for any $f,g\in L(\mathbb{F}_p)$ we define
$$(f\circ g)(x)=\sum_{y}f(y)g(x+y).$$
Shkredov \cite[Lemma 2.1]{Shkredov} obtained the following result.
\begin{lemma}[Shkredov]\label{Lem. Shkredov on C(f)}
For any $f,g\in L(\mathbb{F}_p)$ and a positive integer $k$,
	\begin{equation*}
		\sum_{x}(f\circ g)^{k+1}(x)
		=\sum_{x_1,\cdots,x_k\in\mathbb{F}_p}C_{k+1}(f)(x_1,\cdots,x_k)\cdot C_{k+1}(g)(x_1,\cdots,x_k).
	\end{equation*}
\end{lemma}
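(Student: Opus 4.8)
The plan is to expand both sides as iterated sums over $\mathbb{F}_p$ and match them through a single change of variables. First I would write out the convolution power explicitly. Since $(f\circ g)(x)=\sum_y f(y)g(x+y)$, introducing independent summation variables $y_0,\ldots,y_k$ gives
$$(f\circ g)^{k+1}(x)=\sum_{y_0,\cdots,y_k}\prod_{j=0}^k f(y_j)\,g(x+y_j).$$
Summing this over $x$ produces a $(k+2)$-fold sum over $(x,y_0,\ldots,y_k)$, which is the object I need to reorganize into the right-hand side.

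The key step is a linear substitution that decouples the $f$-factors from the $g$-factors. I would set $u=y_0$, $t_j=y_j-y_0$ for $j=1,\ldots,k$, and $v=x+y_0$; this is a bijection of $\mathbb{F}_p^{k+2}$ onto itself with inverse $y_0=u$, $y_j=u+t_j$, $x=v-u$. Under it $y_j=u+t_j$ and $x+y_j=v+t_j$ for all $j$ (taking $t_0=0$), so the summand factors as
$$\Big(f(u)\prod_{j=1}^k f(u+t_j)\Big)\cdot\Big(g(v)\prod_{j=1}^k g(v+t_j)\Big).$$
The two bracketed groups now share only the variables $t_1,\ldots,t_k$, so the sums over $u$ and over $v$ separate. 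Recognizing $\sum_u f(u)\prod_{j=1}^k f(u+t_j)=C_{k+1}(f)(t_1,\ldots,t_k)$ and the analogous identity for $g$, the whole expression collapses to $\sum_{t_1,\cdots,t_k}C_{k+1}(f)(t_1,\ldots,t_k)\cdot C_{k+1}(g)(t_1,\ldots,t_k)$, which is exactly the claimed right-hand side.

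Because everything is a finite sum over a finite field, there are no convergence or positivity issues, so I expect no genuine analytic obstacle. The only point requiring care is confirming that the map $(x,y_0,\ldots,y_k)\mapsto(u,t_1,\ldots,t_k,v)$ is a bijection of $\mathbb{F}_p^{k+2}$—which legitimizes the reindexing—and checking that the shift relation $x+y_j=v+t_j$ holds uniformly in $j$, including the degenerate index $j=0$. Once the substitution is in place, the identity drops out without further computation.
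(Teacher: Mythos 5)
Your proof is correct. The paper itself gives no proof of this lemma---it simply cites Shkredov's Lemma 2.1---but your argument (expand the $(k+1)$-st power, apply the linear bijection $(x,y_0,\ldots,y_k)\mapsto(u,t_1,\ldots,t_k,v)$ with $u=y_0$, $t_j=y_j-y_0$, $v=x+y_0$, and separate the $u$- and $v$-sums) is exactly the standard derivation of this identity, and every step checks out.
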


\section{Proofs of Theorems \ref{Thm. A}--\ref{Thm. B}}	
\setcounter{lemma}{0}
\setcounter{theorem}{0}
\setcounter{corollary}{0}
\setcounter{remark}{0}
\setcounter{equation}{0}
\setcounter{conjecture}{0}

For any $A\subseteq\mathbb{F}_p$, the characteristic function of $A$ is defined by
$$A(x)=\begin{cases}
	1&\mbox{if}\ x\in A\\
	0&\mbox{otherwise.}
\end{cases}$$

{\bf Proof of Theorem \ref{Thm. A}.} Suppose $A+B=C_p$ with $|A|,|B|\ge2$. Then
\begin{align*}
	\sum_{x\in B}(A\circ\psi)^4(x)
	&=\sum_{x\in B}\sum_{y_1,y_2,y_3,y_4\in\mathbb{F}_p}\prod_{j=1}^4A(y_j)\psi(y_j+x)\\
	&=\sum_{x\in B}\sum_{y_1,y_2,y_3,y_4\in A}\prod_{j=1}^42=16|A|^4|B|.
\end{align*}
Hence
\begin{equation}\label{Eq. A in the proof of Thm. A}
16|A|^4|B|=\sum_{x\in B}(A\circ\psi)^4(x)\le\sum_{x}(A\circ\psi)^4(x).
\end{equation}

On the other hand, by Lemma \ref{Lem. Shkredov on C(f)} we have
$$\sum_{x}(A\circ\psi)^4(x)=\sum_{x_1,x_2,x_3\in\mathbb{F}_p}C_4(A)(x_1,x_2,x_3)\cdot C_4(\psi)(x_1,x_2,x_3).$$
Let 
$$E=\{(x,x,0),(x,0,x),(0,x,x):\ x\in\mathbb{F}_p^{\times}\}\cup\{(0,0,0)\}.$$
By Lemma \ref{Lem. Weil's theorem} if $(x_1,x_2,x_3)\not\in E$, then
\begin{equation}\label{Eq. B in the proof of Thm. A}
	|C_4(\psi)(x_1,x_2,x_3)|\le48\sqrt{p}.
\end{equation}
If $(x_1,x_2,x_3)\in E\setminus\{(0,0,0)\}$, then there is an element $t\in\mathbb{F}_p^{\times}$ such that 
\begin{align*}
	C_{4}(\psi)(x_1,x_2,x_3)
	&=\sum_{x}\psi^2(x)\psi^2(x+t)\\
	&=\sum_{x}(2\chi_0(x)+\psi(x))(2\chi_0(x+t)+\psi(x+t))\\
	&=4(p-2)+\sum_{x}\psi(x)\psi(x+t).
\end{align*}
By this and Lemma \ref{Lem. another bound} one can verify that 
\begin{equation}\label{Eq. B1 in the proof of Thm. A}
	C_4(\psi)(x_1,x_2,x_3)\le\begin{cases}
		4p+4\sqrt{p}&\mbox{if}\ (x_1,x_2,x_3)\in E\setminus\{(0,0,0)\},\\
	6p&\mbox{if}\ (x_1,x_2,x_3)=(0,0,0).	\end{cases}
\end{equation}

By (\ref{Eq. A in the proof of Thm. A})--(\ref{Eq. B1 in the proof of Thm. A}) we obtain
\begin{align*}
	16|A|^4|B|&\le\sum_{x_1,x_2,x_3\in\mathbb{F}_p}C_4(A)(x_1,x_2,x_3)\cdot C_4(\psi)(x_1,x_2,x_3)\\
	          &\le 48\sqrt{p}|A|^4+3(4p+4\sqrt{p})|A|^2+6p|A|,
\end{align*}
i.e.,
\begin{equation}\label{Eq. C in the proof of Thm. A}
	|A|^2|B|\le 3\sqrt{p}|A|^2+\frac{3}{4}(p+\sqrt{p})+\frac{3p}{8|A|}\le 3\sqrt{p}|A|^2+\frac{3}{4}(p+\sqrt{p})+\frac{p}{8}.
\end{equation}
The last inequality follows from Lemma \ref{Lem. k>2}.

By this and $|A||B|\ge|C_p|$ we obtain
\begin{equation}\label{Eq. D in the proof of Thm. A}
	\frac{p-1}{3}|A|\le 3\sqrt{p}|A|^2+\frac{7}{8}p+\frac{3}{4}\sqrt{p}.
\end{equation}
When $p\ge9096$ one can verify that
$$0\le\frac{\frac{p-1}{3}-\sqrt{\Delta}}{6\sqrt{p}}<6,$$
where
$$\Delta=\left(\frac{p-1}{3}\right)^2-3\sqrt{p}\left(\frac{7}{2}p+3\sqrt{p}\right).$$
On the other hand, note that by Lemma \ref{Lem. another bound} we have $|A|,|B|\ge6$ if $p\ge 9096$.
Hence the quadratic inequality (\ref{Eq. D in the proof of Thm. A}) indeed implies that for $p\ge9096$ we have
\begin{equation}\label{Eq. E in the proof of Thm. A}
	|A|\ge\frac{\frac{p-1}{3}+\sqrt{\Delta}}{6\sqrt{p}}\ge\frac{\sqrt{p}}{18}.
\end{equation}
By this and (\ref{Eq. C in the proof of Thm. A}) again we further obtain that if $p\ge9096$, then
\begin{equation}\label{Eq. F in the proof of Thm. A}
	|B|\le 3\sqrt{p}+\frac{3p+3\sqrt{p}}{4|A|^2}+\frac{3p}{8|A|^3}\le 3\sqrt{p}+269.
\end{equation}
With the method as above, we can also obtain that $|B|\ge \frac{\sqrt{p}}{18}$
and $|A|\le 3\sqrt{p}+269$ if $p\ge9096$.

Moreover, if $p$ is large enough, then by (\ref{Eq. E in the proof of Thm. A}) and the above method one can verify that
\begin{equation}\label{Eq. F in the proof of Thm. A}
	|A|,|B|\ge (\frac{1}{9}-o(1))\sqrt{p}.
\end{equation}

In view of the above, we have completed the proof.\qed

To prove our next result, we need the following result which is a special case of \cite[Theorem 1.2]{GMR}.
\begin{lemma}[Gyarmati, Matolcsi and Ruzsa]\label{Lem. GMR}
	Let $p$ be an odd prime and let $A_1,A_2,\cdots, A_k\subseteq\mathbb{F}_p$ be non-empty subsets. Then
	$$\left|\sum_{j=1}^kA_j\right|^{k-1}\le\prod_{j=1}^k\left|\sum_{i\neq j}A_i\right|.$$
\end{lemma}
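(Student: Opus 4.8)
This is a Pl\"unnecke--Ruzsa-type inequality for distinct summands, and the plan is to prove it by induction on $k$ after isolating the one genuinely non-trivial ingredient. Write $S=A_1+\cdots+A_k$ and $S_j=\sum_{i\neq j}A_i$, so that the claim reads $|S|^{k-1}\le\prod_{j=1}^k|S_j|$. The base case $k=2$ is immediate, since $|S|=|A_1+A_2|\le|A_1|\,|A_2|=|S_2|\,|S_1|$. For the inductive step I would collapse the last two summands, regarding $A_1,\ldots,A_{k-2},D$ as a family of $k-1$ sets with $D=A_{k-1}+A_k$, and setting $T=A_1+\cdots+A_{k-2}$.

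Applying the induction hypothesis to these $k-1$ sets gives $|S|^{k-2}\le|T|\cdot\prod_{j=1}^{k-2}|S_j|$: here the factor obtained by omitting $A_j$ (for $j\le k-2$) still retains both $A_{k-1}$ and $A_k$ inside $D$ and hence equals $|S_j|$, while the factor obtained by omitting $D$ is exactly $|T|$. It then remains only to control the single extra factor $|T|$. Since $T+A_{k-1}=S_k$, $T+A_k=S_{k-1}$ and $T+A_{k-1}+A_k=S$, the Pl\"unnecke--Ruzsa inequality
$$|T|\cdot|T+A_{k-1}+A_k|\le|T+A_{k-1}|\cdot|T+A_k|$$
yields $|T|\le|S_{k-1}|\,|S_k|/|S|$. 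Substituting this into the bound above and rearranging produces $|S|^{k-1}\le\prod_{j=1}^k|S_j|$, which closes the induction. For the case $k=3$ actually used in Theorem \ref{Thm. B}, the scheme reduces to a single application of the displayed inequality centred at $T=A_1$, together with the trivial estimate $|S|\le|A_1|\,|A_2+A_3|$.

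The heart of the argument --- and the step I expect to be hardest --- is the displayed inequality $|T|\,|T+X+Y|\le|T+X|\,|T+Y|$, i.e. the Pl\"unnecke--Ruzsa inequality with one common summand $T$ and two distinct summands $X,Y$. This does not follow from an elementary injection in the manner of Ruzsa's \emph{difference} triangle inequality $|T|\,|X-Y|\le|X-T|\,|T-Y|$, precisely because the middle set cannot be cancelled in a pure sum. I would instead deduce it from Pl\"unnecke's inequality for commutative layered graphs, extracting the sharp constant via Menger's theorem combined with the tensor-power trick; alternatively one can use Petridis's inductive form of the Pl\"unnecke--Ruzsa inequality, taking care that the bound is obtained for the full set $T$ rather than merely for an optimal subset. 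Once this single inequality is available, the induction above is entirely formal.
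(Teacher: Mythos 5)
The paper itself offers no proof of this lemma --- it is quoted directly as a special case of \cite[Theorem 1.2]{GMR} --- so the only question is whether your argument stands on its own. Your formal framework is fine: the base case $k=2$ is trivial, the collapse of $A_{k-1}+A_k$ into a single set $D$ correctly reduces the inductive step to the single estimate $|T|\le |S_{k-1}|\,|S_k|/|S|$, and you have correctly identified that everything hinges on the inequality $|T|\cdot|T+X+Y|\le|T+X|\cdot|T+Y|$. The problem is that this inequality is \emph{false}, and the ``care'' you propose to take --- obtaining the Pl\"unnecke--Ruzsa bound for the full set $T$ rather than for an optimal subset --- cannot be taken. Pl\"unnecke's method (or Petridis's) only yields a nonempty $T'\subseteq T$ with $|T'+X+Y|\le \frac{|T+X|\,|T+Y|}{|T|^{2}}\,|T'|$; since $|T+X+Y|\ge |T'+X+Y|\ge |X+Y|$, this gives only the weaker, true inequality $|T|\cdot|X+Y|\le|T+X|\cdot|T+Y|$, which does not close your induction (you would need $|X+Y|\ge|T+X+Y|$, which goes the wrong way). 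The full-set version is precisely the asymmetric statement that the symmetric GMR inequality was designed to circumvent.

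Here is an explicit counterexample to your key inequality, in $\Z^2$: with $[n]=\{0,\dots,n-1\}$ take
$$T=\{0,1\}^2\cup\{(4,4)\},\qquad X=\{(0,0),(1,0)\},\qquad Y=\{(0,0),(0,1)\}.$$
Then $|T|=5$, $|T+X|=|T+Y|=8$, and $T+X+Y=\{0,1,2\}^2\cup\{4,5\}^2$ has $13$ elements, so $|T|\cdot|T+X+Y|=65>64=|T+X|\cdot|T+Y|$. (More generally, $T=[n]^2\cup\{(2n,2n)\}$, $X=[n]\times\{0\}$, $Y=\{0\}\times[n]$ gives $|T|\,|T+X+Y|-|T+X|\,|T+Y|=(n-1)^4$.) The example transfers to $\Z$, and hence to $\F_p$ for large $p$, via the Freiman isomorphism $(x,y)\mapsto x+100y$: take $T=\{0,1,100,101,404\}$, $X=\{0,1\}$, $Y=\{0,100\}$. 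Note that the symmetric inequality you are trying to prove does hold in this example, $13^2=169\le|T+X|\,|X+Y|\,|Y+T|=8\cdot4\cdot8=256$, so the failure is not an artifact: your reduction genuinely replaces the lemma by a strictly stronger and false statement, and no proof of the displayed inequality can exist. Any correct proof (such as the one in \cite{GMR}) must avoid isolating a single summand in this way.
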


{\bf Proof of Theorem \ref{Thm. B}.} We prove this theorem by contradiction. Now suppose $A+B+C=C_p$ with $|A|,|B|,|C|\ge2$. Then by Lemma \ref{Lem. GMR} and Theorem \ref{Thm. A} we have
$$\left(\frac{p-1}{3}\right)^2=|A+B+C|^2\le|A+B|\cdot|B+C|\cdot|C+A|\le\left(3\sqrt{p}+379\right)^3.$$
Howeover, when $p>184291$ we have
$$\left(\frac{p-1}{3}\right)^2>\left(3\sqrt{p}+269\right)^3,$$
which deduces a contradiction.

In view of the above, we have completed the proof.\qed

\section{Preparations for the Proofs of Theorem \ref{Thm. C}--\ref{Thm. D}}	
\setcounter{lemma}{0}
\setcounter{theorem}{0}
\setcounter{corollary}{0}
\setcounter{remark}{0}
\setcounter{equation}{0}
\setcounter{conjecture}{0}

We first introduce some necessary notations. For any $f,g\in L(\mathbb{F}_p)$, the inner product $\langle f,g\rangle$ of $f,g$ is defined by
$$\langle f,g\rangle=\sum_{x}f(x)\overline{g(x)}.$$
Set
$$||f||_2=(\langle f,f\rangle)^{1/2}=\left(\sum_{x}|f(x)|^2\right)^{1/2},$$
and
$$\langle f\rangle=\langle f,1\rangle=\sum_{x}f(x).$$
Also, the convolution $f*g$ is defined by
$$(f*g)(x)=\sum_{y}f(y)g(x-y).$$
Note that
\begin{equation}\label{Eq. relations between circ and convolution}
(f*g)(x)=(f^c\circ g)(x),
\end{equation}
where $f^c(x)=f(-x)$.

\begin{lemma}\label{Lem. computation of inner product}
	Let $p\equiv1\pmod3$ be a prime and let $f,g\in L(\mathbb{F}_p)$. Then  $\langle f\circ\psi, g\circ\psi\rangle$ is equal to
	\begin{align*}
	 2p\langle f,g\rangle-2\langle f\rangle\cdot\langle \bar{g}\rangle+J(\chi,\chi)\langle\chi^2,\bar{f}\circ g\rangle+J(\chi^2,\chi^2)\langle\chi,\bar{f}\circ g\rangle,
	\end{align*}
where $J(\chi,\chi)$ is the Jacobi sum.
\end{lemma}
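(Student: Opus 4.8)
The plan is to expand everything in terms of the defining relation $\psi(x)=\chi(x)+\chi(x^2)=\chi(x)+\chi^2(x)$ and then collect the resulting four bilinear sums. First I would write out the inner product directly from its definition:
\begin{align*}
\langle f\circ\psi,g\circ\psi\rangle=\sum_{x}(f\circ\psi)(x)\,\overline{(g\circ\psi)(x)}
=\sum_{x}\sum_{y,z}f(y)\,\psi(x+y)\,\overline{g(z)}\,\overline{\psi(x+z)}.
\end{align*}
Since $\psi$ is a fixed integer-valued function it is real, so $\overline{\psi(x+z)}=\psi(x+z)$, and I may drop the conjugation on $\psi$ while keeping it on $g$. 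Substituting $\psi=\chi+\chi^2$ in each of the two factors produces four terms indexed by the pair of exponents $(i,j)\in\{1,2\}^2$, each of the shape $\sum_{x,y,z}f(y)\overline{g(z)}\,\chi^i(x+y)\,\chi^j(x+z)$.

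Next I would evaluate the inner sum $\sum_{x}\chi^i(x+y)\chi^j(x+z)$ for each of the four exponent pairs. The two ``diagonal'' pairs $(i,j)=(1,2)$ and $(2,1)$ give sums $\sum_x\chi(x+y)\chi^2(x+z)=\sum_x\chi(x+y)\overline{\chi(x+z)}$, which by the standard orthogonality/character-sum evaluation equals $p-1$ when $y=z$ and $-1$ when $y\neq z$ (here $\chi^2=\bar\chi=\chi^{-1}$ because $\chi$ has order $3$). Summing these two diagonal contributions against $f(y)\overline{g(z)}$ yields the main term $2p\langle f,g\rangle-2\langle f\rangle\langle\bar g\rangle$, after writing $\sum_{y,z}$ with the $y=z$ part giving $p\langle f,g\rangle$ and the full sum giving $\langle f\rangle\langle\bar g\rangle$. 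The two ``off-diagonal'' pairs $(i,j)=(1,1)$ and $(2,2)$ give sums of the form $\sum_x\chi^i(x+y)\chi^i(x+z)$, and after the shift $x\mapsto x-y$ these become (for $y\ne z$) Jacobi-type sums: $\sum_x\chi^i(x)\chi^i(x+(z-y))$, which I would identify with $\chi^i(z-y)^{\,?}$ times $J(\chi^i,\chi^i)$ via the standard relation $\sum_x\chi^i(x)\chi^i(t-x)=\chi^i(t)\,J(\chi^i,\chi^i)$ (taking care with the precise argument and the $t=0$ contribution). Assembling the $(1,1)$ term gives $J(\chi,\chi)\sum_{y,z}f(y)\overline{g(z)}\,\chi^2(z-y)=J(\chi,\chi)\langle\chi^2,\bar f\circ g\rangle$, and similarly the $(2,2)$ term gives $J(\chi^2,\chi^2)\langle\chi,\bar f\circ g\rangle$; recognizing these as the claimed inner products $\langle\chi^i,\bar f\circ g\rangle$ is just unwinding the definition $(\bar f\circ g)(t)=\sum_y\overline{f(y)}g(y+t)$.

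The main obstacle, and the step to be most careful with, is the bookkeeping in the two off-diagonal Jacobi-sum terms: matching the shifted character sum $\sum_x\chi^i(x)\chi^i(t-x)$ to $J(\chi^i,\chi^i)$ with the correct character argument ($\chi^i(t)$ versus $\chi^i(-1)$ factors, and the value at $t=0$), and then checking that these degenerate $t=0$ contributions either vanish or are absorbed cleanly into the main term rather than producing spurious extra terms. I would verify the final identity by confirming it on the simple test case $f=g=1$, where $\bar f\circ g$ is the constant function $p$ supported appropriately, both Jacobi-sum terms collapse through $\sum_x\chi^i(x)=0$, and the right-hand side reduces to $2p\cdot p-2p\cdot p=0$, matching $\langle 1\circ\psi,1\circ\psi\rangle$ since $\sum_x\psi(x)=0$ forces $(1\circ\psi)\equiv 0$.
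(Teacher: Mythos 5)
Your proposal is correct and follows essentially the same route as the paper: both reduce $\langle f\circ\psi,g\circ\psi\rangle$ to the inner character sums $\sum_{x}\psi(x+y)\psi(x+z)$, evaluated via orthogonality in the $y=z$ case and via the Jacobi sums $J(\chi,\chi)$, $J(\chi^2,\chi^2)$ and $J(\chi,\chi^2)=-1$ in the $y\neq z$ case. The only difference is organizational — you expand $\psi=\chi+\chi^2$ at the outer level into four bilinear terms, while the paper first splits on $y=z$ versus $y\neq z$ and quotes the closed form of $\sum_x\psi(x)\psi(x+t)$ — and your flagged care with the $\chi^i(-1)$ factors and $t=0$ contributions is exactly where the bookkeeping matters.
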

\begin{proof}
	By definition
\begin{align*}
		\langle f\circ\psi,  g\circ\psi\rangle
		&=\sum_{x}(f\circ\psi)(x)\cdot\overline{( g\circ\psi)(x)}\\
		&=\sum_{y,z}f(y)\overline{g(z)}\sum_{x}\psi(x+y)\psi(x+z).		
\end{align*}
If $y=z$, then
\begin{align*}
	\sum_{x}\psi(x+y)\psi(x+z)=\sum_{x}\psi(x)^2=\frac{4(p-1)}{3}+\frac{2(p-1)}{3}=2(p-1).
\end{align*}
If $y\neq z$, writing $t=y-z\neq0$, then
\begin{align*}
	\sum_{x}\psi(x+y)\psi(x+z)
	&=\sum_{x}\psi(x)\psi(x+t)\\
	&=\chi^2(t)J(\chi,\chi)+\chi(t)\overline{J(\chi,\chi)}+2J(\chi,\chi^2)\\
	&=\chi^2(t)J(\chi,\chi)+\chi(t)\overline{J(\chi,\chi)}-2.
\end{align*}
The last equality follows from $J(\chi,\chi^2)=-1$.
By above results, one can verify that $\langle f\circ\psi, g\circ\psi\rangle$ is equal to
\begin{align*}
	2p\langle f,g\rangle-2\langle f\rangle\cdot\langle \bar{g}\rangle+J(\chi,\chi)\langle\chi^2,\bar{f}\circ g\rangle+J(\chi^2,\chi^2)\langle\chi,\bar{f}\circ g\rangle.
\end{align*}
This completes the proof.
\end{proof}

\section{Proofs of Theorems \ref{Thm. C}--\ref{Thm. D}}	
\setcounter{lemma}{0}
\setcounter{theorem}{0}
\setcounter{corollary}{0}
\setcounter{remark}{0}
\setcounter{equation}{0}
\setcounter{conjecture}{0}

{\bf Proof of Theorem \ref{Thm. C}.} Suppose $A+B=C_p$. Set $|A|=a,|B|=b$. Define a function $r\in L(\mathbb{F}_p)$ by
$$(A\circ\psi)(x)=2aB(x)+r(x)$$
for any $x\in\mathbb{F}_p$. For any $x\in B$, we observe that
$$(A\circ\psi)(x)=\sum_{y}A(y)\psi(y+x)=\sum_{y\in A}\psi(x+y)=2a=2aB(x),$$
i.e., $r(x)=0$ for any $x\in B$. Hence
$$||r||_2^2=\sum_{x\not\in B}|r(x)|^2=\sum_{x\not\in B}|(A\circ\psi)(x)|^2=||A\circ\psi||_2^2-\sum_{x\in B}|(A\circ\psi)(x)|^2.$$
By Lemma \ref{Lem. computation of inner product}
\begin{align*}
	||A\circ\psi||_2^2
	&=2p\langle A,A\rangle-2\langle A\rangle^2+2\Re\left(J(\chi,\chi)\langle\chi^2,A\circ A\rangle\right)\\
	&=2pa-2a^2+2\Re\left(J(\chi,\chi)\langle\chi^2,A\circ A\rangle\right),
\end{align*}
where $\Re(z)$ is the real part of a complex number $z$. On the other hand,
$$\left|\Re\left(J(\chi,\chi)\langle\chi^2,A\circ A\rangle\right)\right|\le |J(\chi,\chi)|\cdot|\langle\chi^2,A\circ A\rangle|\le a^2\sqrt{p}.$$
The last inequality follows from $|J(\chi,\chi)|=\sqrt{p}$ and
\begin{align*}
|\langle\chi^2,A\circ A\rangle|
=\left|\sum_{x}\chi^2(x)(A\circ A)(x)\right|
&=\left|\sum_{x}\chi^2(x)\sum_{y}A(y)A(x+y)\right|\\
&=\left|\sum_{x,y\in A}\chi^2(x-y)\right|\le a^2.
\end{align*}
Note also that
\begin{align*}
\sum_{x\in B}|(A\circ\psi)(x)|^2
&=\sum_{x\in B}\sum_{y}A(y)\psi(x+y)\sum_{z}A(z)\psi(x+z)\\
&=\sum_{x\in B}\sum_{y,z\in A}\psi(x+y)\psi(x+z)=4a^2b.
\end{align*}
By the above results, we have
\begin{equation}\label{Eq. a in the proof of Thm. C}
	||r||^2_2=||A\circ\psi||_2^2-\sum_{x\in B}|(A\circ\psi)(x)|^2\le 2ap+2a^2\sqrt{p}-2a^2-4a^2b.
\end{equation}

On the other hand,
\begin{align*}
	\langle r\rangle=\sum_{x}r(x)=\sum_{x}(A\circ\psi)(x)-2a\sum_{x}B(x)=-2ab.
\end{align*}
The last equality follows from
$$\sum_{x}(A\circ\psi)(x)=\sum_{x}\sum_{y}A(y)\psi(x+y)=\sum_{y}A(y)\sum_{x}\psi(x+y)=0.$$
By the Cauchy-Schwarz inequality
$$|\langle r\rangle|^2=\left|\sum_{x\not\in B}r(x)\right|^2\le\sum_{x\not\in B}1^2\sum_{x\not\in B}r(x)^2=(p-b)\cdot||r||_2^2.$$
Combining this with (\ref{Eq. a in the proof of Thm. C}), we obtain
\begin{equation}\label{Eq. b in the proof of Thm. C}
	a+b+2ab\le p+a\sqrt{p}+\left(\frac{1}{p}-\frac{1}{\sqrt{p}}\right)ab\le p+a\sqrt{p}.
\end{equation}

Now suppose $|A|=|B|=a$. By (\ref{Eq. b in the proof of Thm. C}) we obtain
$$2a^2+2a\le p+a\sqrt{p}.$$
This implies that
$$a\le\frac{\sqrt{p}-2+\sqrt{(\sqrt{p}-2)^2+8p}}{4}<\sqrt{p}.$$
Moreover, if $A=B$, then
$$\binom{a}{2}+a\ge |C_p|.$$
This implies
$$a\ge\frac{-1}{2}+\frac{\sqrt{3}}{6}\sqrt{8p-5}.$$

In view of the above, we have completed the proof.\qed

To prove our last result, we need the following result due to Shkredov \cite[Lemma 3.1]{Shkredov}.
\begin{lemma}[Shkredov]\label{Lem. Shkredov trick}
	Let $c$ be an integer and let $f:\mathbb{F}_p\rightarrow\mathbb{Z}$ be a function. Then
	\begin{equation}\label{Eq. a1 of Shkredov}
		||f||_2^2\ge c|\langle f\rangle|-(c-1)\cdot\left|\sum_{x: 0<|f(x)|<c}f(x)\right|.
	\end{equation}
Further,
\begin{equation}\label{Eq. a2 of Shkredov}
	||f||_2^2=c\langle f\rangle+\sum_{k\in\mathbb{Z}}N_k(k^2-ck),
\end{equation}
where $N_k=|\{x\in\mathbb{F}_p:\ f(x)=k\}|$.
\end{lemma}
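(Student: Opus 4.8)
The plan is to prove the exact identity (\ref{Eq. a2 of Shkredov}) first and then to extract the inequality (\ref{Eq. a1 of Shkredov}) from it by an elementary term-by-term estimate. I treat the main case $c\ge1$; when $c\le0$ the index set $\{x:\ 0<|f(x)|<c\}$ is empty and the right-hand side of (\ref{Eq. a1 of Shkredov}) is non-positive, so the bound is immediate.

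For (\ref{Eq. a2 of Shkredov}), I would simply sort the two defining sums by the value of $f$. Since $f$ takes integer values and $N_k=|\{x\in\mathbb{F}_p:\ f(x)=k\}|$, we have $||f||_2^2=\sum_{x}f(x)^2=\sum_{k\in\mathbb{Z}}N_kk^2$ and $\langle f\rangle=\sum_{x}f(x)=\sum_{k\in\mathbb{Z}}N_kk$. Subtracting $c\langle f\rangle=\sum_{k}N_k\,ck$ from the first identity yields $||f||_2^2-c\langle f\rangle=\sum_{k}N_k(k^2-ck)$, which is exactly (\ref{Eq. a2 of Shkredov}).

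For (\ref{Eq. a1 of Shkredov}), set $S=\sum_{x:\,0<|f(x)|<c}f(x)=\sum_{0<|k|<c}N_kk$ and first assume $\langle f\rangle\ge0$, so that $c\langle f\rangle=c|\langle f\rangle|$. Writing (\ref{Eq. a2 of Shkredov}) as $||f||_2^2-c\langle f\rangle=\sum_{k}N_k\,k(k-c)$, I note that $k(k-c)\ge0$ unless $0<k<c$, so after discarding the non-negative terms with $k\le0$ or $k\ge c$ it remains only to estimate the blocks $0<k<c$ and $-c<k<0$. On the first, $k-c\ge-(c-1)$ gives $k(k-c)\ge-(c-1)k$; on the second, writing $k=-j$ with $0<j<c$ gives $k(k-c)=j(j+c)\ge(c-1)j$. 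Adding these two estimates and recombining,
\[
\sum_{k}N_k\,k(k-c)\ge-(c-1)\sum_{0<k<c}N_kk+(c-1)\sum_{0<j<c}N_{-j}j=-(c-1)S\ge-(c-1)|S|,
\]
whence $||f||_2^2\ge c|\langle f\rangle|-(c-1)|S|$. If instead $\langle f\rangle<0$, I would apply the conclusion just obtained to $-f$: this leaves $||f||_2^2$ and $|S|$ unchanged while replacing $\langle f\rangle$ by $-\langle f\rangle>0$, yielding the same bound.

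The delicate point---and the step I expect to be the real obstacle---is matching the stated penalty $(c-1)|S|$ rather than the weaker $(c-1)\sum_{0<|f(x)|<c}|f(x)|$ that a crude estimate of the single block $0<k<c$ would produce. The resolution is to retain, rather than discard, the positive contribution of the negative-value terms $-c<k<0$: the bound $j(j+c)\ge(c-1)j$ is precisely strong enough to cancel the corresponding absolute values and convert $\sum|f(x)|$ into $|\sum f(x)|=|S|$. Verifying the two scalar inequalities $k(k-c)\ge-(c-1)k$ and $j(j+c)\ge(c-1)j$ on their integer ranges is routine and completes the argument.
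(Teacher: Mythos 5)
Your proof is correct. The paper gives no argument for this lemma at all---it is quoted verbatim from Shkredov \cite[Lemma 3.1]{Shkredov}---so there is nothing internal to compare against; your write-up is a valid self-contained derivation. The identity (\ref{Eq. a2 of Shkredov}) is indeed just a regrouping by level sets, and your handling of the inequality is sound: the two scalar bounds $k(k-c)\ge-(c-1)k$ for $0<k<c$ and $j(j+c)\ge(c-1)j$ for $0<j<c$ hold on the stated ranges, the signed recombination correctly produces $-(c-1)S$ rather than the weaker $-(c-1)\sum_{0<|f(x)|<c}|f(x)|$ that a one-sided estimate would give, and the reductions to $\langle f\rangle\ge0$ (via $f\mapsto-f$) and to $c\ge1$ (the index set being empty otherwise) are both legitimate. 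You correctly identified the only delicate point---retaining the positive contribution of the block $-c<k<0$ to convert the penalty into $|S|$---and resolved it.
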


Now we prove our last result.

{\bf Proof of Theorem \ref{Thm. D}.} Set $|A|=a$. The case $a=1$ is trivial. We now suppose $a\ge2$ and
$A-A=C_p\cup\{0\}$. Note that
for any $y\in\mathbb{F}_p$ we have
$$(A-y)-(A-y)=A-A.$$
By this we may assume $0\in A$ and hence $\emptyset\ne A\setminus\{0\}\subseteq C_p$. Let $s\in L(\mathbb{F}_p)$ defined by
$$(A*\psi)(x)=2(a-1)A(x)+s(x)$$
for any $x\in\mathbb{F}_p$. Then for any $x\in A$ we have
$$(A*\psi)(x)=\sum_{y}A(y)\psi(x-y)=\sum_{y\in A}\psi(x-y)=2(a-1)=2(a-1)A(x),$$
i.e., $s(x)=0$ for any $x\in A$.  By (\ref{Eq. relations between circ and convolution}) we have
$A*\psi=A^c\circ\psi$. Hence by the method appeared in the proof of Theorem \ref{Thm. C} we obtain
\begin{equation}\label{Eq. 2 norm of s}
	||s||_2^2\le 2ap+2a^2\sqrt{p}-2a^2-4(a-1)^2a
\end{equation}
and
\begin{equation}\label{Eq. langle s rangle}
\langle s\rangle=-2a(a-1).
\end{equation}
Moreover, as $||s||_2\ge 0$, we obtain a trivial upper bound of $a$, i.e.,
\begin{equation}\label{Eq. lower bound of p}
	\sqrt{p}\ge \frac{-a+\sqrt{9a^2-12a+8}}{2}= \frac{-a+\sqrt{5a^2+4a^2-12a+8}}{2}\ge \frac{\sqrt{5}-1}{2}a.
\end{equation}

On the other hand, for any $x\not\in A$ we have
\begin{align*}
	s(x)=\sum_{y}A(y)\psi(x-y)
	&=\sum_{y\in A}\psi(x-y)\\
	&=2|C_p\cap(x-A)|-|D_p\cap(x-A)|\\
	&=3|C_p\cap(x-A)|-a,
\end{align*}
where $D_p=\{x\in\mathbb{F}_p:\ \psi(x)=-1\}$. The above result implies that
$$s(x)\equiv -a\pmod3$$
for any $x\not\in A$.

{\bf Case 1.} $a\equiv0\pmod3$.

Applying $c=3$ to (\ref{Eq. a1 of Shkredov}), we obtain $$||s||_2^2\ge 3|\langle s\rangle|.$$
By (\ref{Eq. 2 norm of s}) and (\ref{Eq. langle s rangle}) we have
$$2a^2-\sqrt{p}a-p-1\le 0$$
and hence
$$a\le \frac{\sqrt{p}+\sqrt{9p-8}}{4}<\sqrt{p}.$$

{\bf Case 2.} $a\not\equiv0\pmod3$.

In this case, as $s(x)\equiv-a\pmod3$ for any $x\not\in A$, we have
$$s(x)=0\Leftrightarrow x\in A.$$
Hence $N_0=a$. On the other hand, note that
$$s(x)=-1\Leftrightarrow\sum_{y\in A}\psi(x-y)=-1\Leftrightarrow G(x)=-1,$$
where 
$$G(x)=\sum_{y\in A}\left((x-y)^{\frac{p-1}{3}}+(x-y)^{\frac{2(p-1)}{3}}\right).$$
Hence
$$N_{-1}\le \frac{2(p-1)}{3}.$$
Applying $c=-2$ to (\ref{Eq. a2 of Shkredov}) we obtain
\begin{align*}
	||s||_2^2
	&=-2\langle s\rangle+\sum_{k\in\mathbb{Z}}N_k(k^2+2k)\\
	&=4a(a-1)-N_{-1}+\sum_{k\neq -1}N_k(k^2+2k)\\
	&\ge 4a(a-1)-N_{-1}+3\sum_{k\neq -1,0}N_k\\
	&=4a(a-1)-4N_{-1}-3N_0+3p\\
	&\ge 4a(a-1)-\frac{8(p-1)}{3}-3a+3p.	
\end{align*}
This gives 
$$4a^2-(2+2\sqrt{p})a-2p-3\le \frac{p+8}{3a}.$$
As $a^2\ge 1+|C_p|$, by (\ref{Eq. 2 norm of s}) we obtain
\begin{align*}
4a^2-(2+2\sqrt{p})a-2p-3\le \frac{p+8}{3a}
&\le \frac{p+8}{\sqrt{3}\sqrt{p+2}}\\
&= \frac{\sqrt{p+2}}{\sqrt{3}}+\frac{2\sqrt{3}}{\sqrt{p+2}}\\
&\le \frac{\sqrt{p+2}}{\sqrt{3}}+\frac{2}{\sqrt{3}}.
\end{align*}
By this we finally obtain
\begin{align*}
	a
	&\le \frac{2+2\sqrt{p}+\sqrt{(2+2\sqrt{p})^2+16(2p+3+\frac{\sqrt{p+2}}{\sqrt{3}}+\frac{2}{\sqrt{3}})}}{8}\\
	&\le \frac{2+2\sqrt{p}+\sqrt{(2+2\sqrt{p})^2+16(2+2\sqrt{p})^2}}{8}\\
	&\le \frac{1+\sqrt{17}}{4}\sqrt{p}+\frac{1+\sqrt{17}}{4}.
\end{align*}

In view of the above, we have completed the proof.\qed

\Ack\ We would like to thank Prof. Hao Pan for his helpful suggestions and steadfast encouragement.

\end{document}